\newtheorem{theorem}{Theorem}
\newtheorem{lemma}[theorem]{Lemma}
\theoremstyle{definition}
\theoremstyle{remark}
\newtheorem{rem}{Remark}
\numberwithin{equation}{section}
\numberwithin{theorem}{section}
\numberwithin{defn}{section}
\newcommand*\diff{\mathop{}\!\mathrm{d}}
\begin{document}
\title[New Proofs of Some Double Sum Rogers-Ramanujan Type Identities]
 {New Proofs of Some Double Sum Rogers-Ramanujan Type Identities}

\author{Liuquan Wang}
\address{School of Mathematics and Statistics, Wuhan University, Wuhan 430072, Hubei, People's Republic of China}
\email{wanglq@whu.edu.cn;mathlqwang@163.com}

\subjclass[2010]{11P84, 33D15, 11B65}

\keywords{Rogers-Ramanujan identities; sum-product identities; Partitions}

\begin{abstract}
Recently, Rosengren utilized an integral method to prove a number of conjectural identities found by Kanade and Russell. Using this integral method, we give new proofs to some double sum identities of Rogers-Ramanujan type. These identities were earlier proved by approaches such as combinatorial arguments or by using $q$-difference equations. Our proofs are based on streamlined calculations, which relate these double sum identities to some known Rogers-Ramanujan type identities with single sums. Moreover, we prove a conjectural identity of Andrews and Uncu which was earlier confirmed by Chern.
\end{abstract}

\maketitle

\section{Introduction and main results}\label{sec-intro}
The famous Rogers-Ramanujan identities state that
\begin{align}\label{RR}
\sum_{n=0}^\infty \frac{q^{n^2}}{(q;q)_n}=\frac{1}{(q,q^4;q^5)_\infty}, \quad \sum_{n=0}^\infty \frac{q^{n(n+1)}}{(q;q)_n}=\frac{1}{(q^2,q^3;q^5)_\infty}.
\end{align}
Here and throughout this paper, we assume that $|q|<1$ for convergence and use the standard $q$-series notation
\begin{align}
&(a;q)_0:=1, \quad (a;q)_n:=\prod\limits_{k=0}^{n-1}(1-aq^k),  \\
&(a_1,\cdots,a_m;q)_n:=(a_1;q)_n\cdots (a_m;q)_n, \quad n\in \mathbb{N}\cup \{\infty\}.
\end{align}

Despite their analytic forms given in \eqref{RR}, the Rogers-Ramanujan identities can also be formulated as combinatorial partition identities.  The $i$-th ($i=1,2$) identity is equivalent to saying that the number of partitions of $n$  such that the adjacent parts differ by least 2 and such that the smallest part is at least $i$ is the same as the number of partitions of $n$ into parts congruent to $\pm i $ modulo 5.

Since the appearances of \eqref{RR}, lots of similar identities have been found and they were called as Rogers-Ramanujan type identities. One of the famous works on this topic is Slater's list \cite{Slater}, which contains 130 such identities. For example, the following identities are in Slater's list \cite[Eqs.\ (34),(36)]{Slater}
\begin{align}
\sum_{n=0}^\infty \frac{q^{n^2+2n}(-q;q^2)_n}{(q^2;q^2)_n}&=\frac{1}{(q^3,q^4,q^5;q^8)_\infty}, \label{Slater34} \\
\sum_{n=0}^\infty \frac{q^{n^2}(-q;q^2)_n}{(q^2;q^2)_n}&=\frac{1}{(q,q^4,q^7;q^8)_\infty}. \label{Slater36}
\end{align}
Both identities were earlier found by Ramanujan \cite[Entries 1.17.11, 1.7.12, 4.2.15]{Lost2}.
G\"{o}llnitz \cite[Eqs.\ (2.22), (2.24)]{Gollnitz} found the following companion identities:
\begin{align}
\sum_{n=0}^\infty \frac{q^{n(n+1)}(-q^{-1};q^2)_n}{(q^2;q^2)_n}&=\frac{1}{(q,q^5,q^6;q^8)_\infty}, \label{Gollnitz22} \\
\sum_{n=0}^\infty \frac{q^{n(n+1)}(-q;q^2)_n}{(q^2;q^2)_n}&=\frac{1}{(q^2,q^3,q^7;q^8)_\infty}. \label{Gollnitz24}
\end{align}

There is now a huge list of Rogers-Ramanujan type identities, and it is evidently that more and more will be discovered in the future. We refer the reader to Sills' book \cite{Sills-book} on this topic for more information.

One of the way to generalize the Rogers-Ramanujan identities is to consider multi-sum identities. For instance, the Andrews-Gordon identities state that
\begin{align}
\sum_{n_1,\dots,n_{k-1}\geq 0}\frac{q^{N_1^2+\cdots+N_{k-1}^2+N_i+\cdots+N_{k-1}}}{(q;q)_{n_1}\cdots (q;q)_{n_{k-1}}}=\frac{(q^{i},q^{2k+1-i},q^{2k+1};q^{2k+1})_\infty}{(q;q)_\infty}, \label{AG}
\end{align}
where $1\leq i\leq k$, $k\geq 2$ and $N_j=n_j+\cdots+n_{k-1}$. This identity was given by Andrews \cite{Andrews1974}, and is the analytic counterpart of Gordon's \cite{Gordon1961} combinatorial generalization of \eqref{RR}.

In recent years, multi-sum identities of the Rogers-Ramanujan type have drawn lots of attention. In particular, a number of recent works studied Rogers-Ramanujan type identities in the form
\begin{align}\label{index-defn}
\text{finite sum of } \sum_{(i_1,\cdots,i_k)\in S}\frac{(-1)^{t(i_1,\cdots,i_k)}q^{Q(i_1,\cdots,i_k)}}{(q^{n_1};q^{n_1})_{i_1}\cdots (q^{n_k};q^{n_k})_{i_k}}=\prod\limits_{ (a,n)\in P} (q^{a};q^n)_\infty^{r(a,n)}.
\end{align}
Here $t(i_1,\cdots,i_k)$ is an integer-valued function, $Q(i_1,\cdots,i_k)$ is a rational polynomial in variables $i_1,\cdots,i_k$, $n_i$ ($i=1,2,\cdots,k$) are positive integers with $\gcd(n_1,\cdots,n_k)$ $=1$, $P$ is a finite subset of $\mathbb{Q}^2$ and $r(a,n)$ are integer-valued functions.

We shall call any identity of the form \eqref{index-defn} a Rogers-Ramanujan type identity with {\it index} $(n_1,\cdots,n_k)$. Note that here we do not allow $q$-Pochhammer symbols to appear in the numerator in each summand. But as we can see from \eqref{Slater34}--\eqref{Gollnitz24}, this may be allowed if we discuss in a more general context.

Around 2019, Kanade and Russell \cite{KR-2019} searched for Rogers-Ramanujan type identities related to level 2 characters of the affine Lie algebra $A_9^{(2)}$, and they conjectured a number of such identities. Let
\begin{align}
F(u,v,w):=\sum_{i,j,k=0}^\infty \frac{(-1)^kq^{3k(k-1)+(i+2j+3k)(i+2j+3k-1)}u^iv^jw^k}{(q;q)_i(q^4;q^4)_j(q^6;q^6)_k},
\end{align}
which is roughly speaking a triple sum of index $(1,4,6)$ according to our definition. Some of their conjectural identities are
\begin{align}
F(q,q^6,q^9)&=\frac{1}{(q,q^4,q^6,q^8,q^{11};q^{12})_\infty}, \label{KR-triple-1} \\
F(q,q,q^6)&=\frac{1}{(q^3;q^4)_\infty (q,q^8;q^{12})_\infty}. \label{KR-tripke-2}
\end{align}
Five of their conjectural identities were proved by Bringmann, Jennings-Shaffer and Mahlburg \cite{BSM}. In a recent work, Rosengren \cite{Rosengren} presented a streamlined approach and proved all of the nine conjectural identities on $F(u,v,w)$.

There are two steps in Rosengren's approach. First, one uses an integral representation of the triple sum to reduce it to a single sum. Second, one needs to evaluate the single sum via $q$-series techniques. The difficulty in the second step varies a lot for different identities. We shall call this approach as the integral method. With this method, Rosengren also gave a new proof of the following identity with index $(1,3)$:
\begin{align}
\sum_{i,j\geq 0}\frac{q^{2i^2+6ij+6j^2}}{(q;q)_i(q^3;q^3)_j}=\frac{1}{(q^3;q^6)_\infty (q^2,q^{10};q^{12})_\infty}. \label{AKKK}
\end{align}
As shown in \cite{KR-2019} and \cite{Kursungoz}, this identity is equivalent to a conjectural partition identity of Capparelli \cite{Capparelli}, which was first proved by Andrews \cite{Andrews1994}.

There are more double sum Rogers-Ramanujan type identities in the literature and their proofs differ from each other. Some of them can be converted to a single sum in a straightforward way. For example,  Takigiku and Tsuchioka \cite{Takigiku2019} proved three identities with index $(1,2)$ such as
\begin{align}\label{eq-Takigiku}
\sum_{i,j\geq 0}\frac{(-1)^jq^{\binom{i}{2}+2\binom{j}{2}+2ij+i+j}}{(q;q)_i(q^2;q^2)_j}=\frac{1}{(q^2,q^3,q^4,q^{10},q^{11},q^{12};q^{14})_\infty}.
\end{align}
Their original proofs of these identities use $q$-difference equations. But they also mentioned \cite[Remark 4.5]{Takigiku2019} that, by summing over $j$ first using Euler's identity
\begin{align}
\sum_{n=0}^\infty \frac{q^{\binom{n}{2}}z^n}{(q;q)_n}=(-z;q)_\infty, \label{Euler-1}
\end{align}
the sum side in \eqref{eq-Takigiku} can easily be shown equal to the left side of Slater's identity \cite[Eq.\ (81)]{Slater}
\begin{align}
\sum_{i=0}^\infty \frac{q^{\frac{1}{2}(i^2+i)}}{(q;q)_i(q;q^2)_i}=\frac{(q,q^6,q^7;q^7)_\infty (q^5,q^9;q^{14})_\infty}{(q;q)_\infty (q;q^2)_\infty},
\end{align}
and hence \eqref{eq-Takigiku} holds.

The same trick was applied by Andrews in \cite{Andrews2019} and he found three more identities with index $(1,2)$ such as
\begin{align}
\sum_{i,j\geq 0}\frac{(-1)^jq^{(i+j)^2}}{(q;q)_i(q^2;q^2)_j}=\frac{1}{(q^2,q^4,q^{10},q^{12};q^{14})_\infty}. \label{Andrews-double}
\end{align}

We will focus on double sum Rogers-Ramanujan type identities in which the sum indexes cannot be separated or eliminated directly.
Clearly \eqref{AKKK} is such an identity. We can find some other such identities in the literature. In a recent work, Uncu and Zudilin \cite{Uncu-Zudilin} discovered two identities with index $(1,2)$ (see \eqref{UZ-1} and \eqref{UZ-2} below). In the way of proving their identities, we find a companion identity \eqref{UZ-3} and we state them together as follows.
\begin{theorem}\label{thm-UZ}
We have
\begin{align}
\sum_{i,j\geq 0}\frac{q^{i^2+2ij+2j^2}}{(q;q)_i(q^2;q^2)_j}&=\frac{(q^3;q^3)_\infty^2}{(q;q)_\infty(q^6;q^6)_\infty}, \label{UZ-1} \\
\sum_{i,j\geq 0}\frac{q^{i^2+2ij+2j^2+i+2j}}{(q;q)_i(q^2;q^2)_j}&=\frac{(q^6;q^6)_\infty^2}{(q^2;q^2)_\infty (q^3;q^3)_\infty}, \label{UZ-2} \\
\sum_{i,j\geq 0}\frac{q^{i^2+2ij+2j^2+j}}{(q;q)_i(q^2;q^2)_j}&=(-q;q)_\infty. \label{UZ-3}
\end{align}
\end{theorem}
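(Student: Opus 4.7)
The plan is to apply Rosengren's integral method uniformly to all three identities. The preliminary observation is that
\[
i^2 + 2ij + 2j^2 \;=\; (i+j)^2 + j^2,
\]
so with $n = i+j$ the exponent in each $S_k$ factors as $q^{(i+j)^2}$ (or $q^{(i+j)(i+j+1)}$ for \eqref{UZ-2}, after absorbing the extra linear term $n$) times a factor depending only on $j$. I would represent this $(i+j)$-part by a Jacobi-triple-product contour integral, using $\theta(z) = \sum_{n\in\mathbb{Z}}q^{n^2}z^n = (-qz,-q/z,q^2;q^2)_\infty$, swap sum and integral, and use the Cauchy identity $\sum_i z^i/(q;q)_i = 1/(z;q)_\infty$ to close the $i$-sum and Euler's identity \eqref{Euler-1} (with base $q^2$) to close the $j$-sum.

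After the substitution $z \mapsto 1/z$, each $S_k$ takes the common shape
\[
S_k \;=\; \frac{1}{2\pi i}\oint \frac{\Theta(z)\,(-q^\bullet z;q^2)_\infty}{z\,(z;q)_\infty}\,dz,
\]
where $\Theta(z)$ is $\theta(z)$ (for \eqref{UZ-1}, \eqref{UZ-3}) or $\theta(z/q)$ (for \eqref{UZ-2}), and $(-q^\bullet z;q^2)_\infty$ is the Euler product arising from the $j$-sum. Using $(z;q)_\infty = (z;q^2)_\infty(qz;q^2)_\infty$ and the Cauchy identity $(az;q^2)_\infty/(z;q^2)_\infty = \sum_m (a;q^2)_m z^m/(q^2;q^2)_m$ twice expands the integrand as a double series in two new indices $m,n$; integrating term by term and using $[z^{-r}]\Theta(z) = q^{r^2}$ or $q^{r(r+1)}$ collapses $S_k$ to a double sum. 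A further application of Euler's identity \eqref{Euler-1} (with base $q^2$) sums the $n$-dependence in closed form, reducing $S_k$ to a single sum.

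For \eqref{UZ-3} this last single sum is $\sum_m q^{m^2}/(q^2;q^2)_m$, which equals $(-q;q^2)_\infty$ by yet another application of Euler, and combines with the overall factor $(-q^2;q^2)_\infty$ picked up at the previous step to give $S_3 = (-q^2;q^2)_\infty(-q;q^2)_\infty = (-q;q)_\infty$. For \eqref{UZ-1} I expect to arrive at
\[
S_1 \;=\; (-q^2;q^2)_\infty \sum_{m \ge 0}\frac{q^{m^2}(-q;q^2)_m}{(q^4;q^4)_m},
\]
and for \eqref{UZ-2} at an analogous formula involving $(-q^3;q^2)_\infty$ and $q^{m(m+1)}$.

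The main obstacle is evaluating the single sums remaining for \eqref{UZ-1} and \eqref{UZ-2} in product form. Unlike \eqref{UZ-3}, where successive Euler applications suffice, these single sums involve Pochhammer symbols across mixed bases and do not admit an immediate factorization. I expect to identify each with a known mod-$6$ Rogers-Ramanujan type identity from Slater's list (after some Pochhammer manipulation interchanging the bases $q$, $q^2$, $q^3$, $q^4$), or, failing a direct match, to prove them directly by a Jacobi-triple-product computation applied to a suitable rewriting of the single sum.
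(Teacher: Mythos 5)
Your reduction is correct as far as it goes, and it is genuinely different in its second half from the paper's. Both arguments begin with the same contour-integral representation (Euler's identities plus the Jacobi triple product), but the paper then evaluates the integral with the Gasper--Rahman residue formula (Lemma \ref{lem-integral}), which splits it into two single sums attached to the poles $c_1=q^{1/2}$, $c_2=-q^{1/2}$; recombining those two pieces is what forces the Heine transformations and the $2$-dissection formulas of Lemma \ref{lem-dissection}. You instead expand the integrand by two applications of the $q$-binomial theorem and extract the constant term directly, which collapses each sum to a \emph{single} remaining series and bypasses the dissection step entirely. I checked the reductions: \eqref{UZ-3} closes completely; for \eqref{UZ-1} one indeed gets $(-q^2;q^2)_\infty\sum_{m\ge0}q^{m^2}(-q;q^2)_m/(q^4;q^4)_m$; and for \eqref{UZ-2}, pairing $(-q^2z;q^2)_\infty$ with $(qz;q^2)_\infty$ rather than with $(z;q^2)_\infty$, one gets $(-q^2;q^2)_\infty\sum_{m\ge0}q^{m^2+2m}(-q;q^2)_m/(q^4;q^4)_m$. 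This is arguably a cleaner route than the paper's, since no recombination of even and odd parts is needed.

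The gap is that the two remaining single sums are never evaluated, and that is precisely where the content of \eqref{UZ-1} and \eqref{UZ-2} lives (the paper spends most of its proof on this step). ``I expect to find these in Slater's list, or else prove them by a Jacobi-triple-product computation'' is a statement of hope, not an argument; and the second fallback cannot work as stated, since these sums are Rogers--Ramanujan type series rather than theta series, so the triple product alone will not evaluate them. The needed facts are: for \eqref{UZ-2}, $\sum_{m}q^{m^2+2m}(-q;q^2)_m/(q^4;q^4)_m=f_6^2/(f_3f_4)$, which is exactly \eqref{Rama-1} with $q\mapsto-q$ (so you may cite the same source the paper does, and then $(-q^2;q^2)_\infty\cdot f_6^2/(f_3f_4)=f_6^2/(f_2f_3)$ finishes it); for \eqref{UZ-1}, $\sum_{m}q^{m^2}(-q;q^2)_m/(q^4;q^4)_m=(-q;q^2)_\infty(q^3;q^6)_\infty^2(q^6;q^6)_\infty/(q^2;q^2)_\infty$, a known modulus-$6$ companion of \eqref{Rama-1}, which you must actually locate and cite (or prove). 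Until those two identifications are made explicit, your argument proves \eqref{UZ-3} but only reduces \eqref{UZ-1} and \eqref{UZ-2} to unproved single-sum identities.
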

Uncu and Zudilin \cite{Uncu-Zudilin}  pointed out that, as explained by Ole Warnaar, these two identities are instances of Bressoud's identities from \cite{Bressoud1979}.  We will provide new proofs, which relate \eqref{UZ-1} to Ramanujan's identity \cite[p.\ 87, Entry 4.2.11]{Lost2}:
\begin{align}
\sum_{n=0}^\infty \frac{(-1)^n(q;q^2)_nq^{n(n+2)}}{(q^4;q^4)_n}=\frac{(q^3;q^3)_\infty (q^{12};q^{12})_\infty}{(q^4;q^4)_\infty (q^6;q^6)_\infty}, \label{Rama-1}
\end{align}
and relate \eqref{UZ-2} to the following identities of Ramanujan \cite[pp.\ 88, 101, Entries 4.2.13 and 5.3.2]{Lost2}:
\begin{align}
\sum_{n=0}^\infty \frac{(-q^2;q^2)_nq^{n(n+1)}}{(q;q)_{2n+1}}&=\frac{(q^3;q^3)_\infty(q^{12};q^{12})_\infty}{(q;q)_\infty (q^6;q^6)_\infty }, \label{Rama-2} \\
\sum_{n=0}^\infty \frac{(-q;q^2)_n q^{n^2}}{(q;q)_{2n}}&=\frac{(q^6;q^6)_\infty^2}{(q;q)_\infty  (q^{12};q^{12})_\infty}. \label{Rama-3}
\end{align}
The identity \eqref{UZ-3} appears to be new. We will see that it is a consequence of \eqref{Euler-1}.

In 2021, through $q$-difference equations, Andrews and Uncu \cite[Theorem 1.1]{Andrews-Uncu} proved the following interesting identity with index $(1,3)$.
\begin{theorem}\label{thm-AU}
\begin{align}\label{Andrews-Uncu-id}
\sum_{i,j\geq 0}\frac{(-1)^jq^{i^2+3ij+3j(3j+1)/2}}{(q;q)_i(q^3;q^3)_j}=\frac{1}{(q;q^3)_\infty}.
\end{align}
\end{theorem}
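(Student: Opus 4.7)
The plan is to reduce the double sum to a single-sum Rogers-Ramanujan type identity via a separation of variables in a bivariate generating function, in the spirit of Rosengren's integral method. First, since $3j(3j+1)/2=\binom{3j+1}{2}$, the algebraic identity $i^2+3ij+\binom{3j+1}{2}=\binom{i+3j+1}{2}+\binom{i}{2}$ allows me to regroup the left side of \eqref{Andrews-Uncu-id} by $m=i+3j$ as $\sum_{m\geq 0}q^{\binom{m+1}{2}}D_m$, where
\[
D_m=\sum_{\substack{i,j\geq 0\\ i+3j=m}}\frac{(-1)^j q^{\binom{i}{2}}}{(q;q)_i(q^3;q^3)_j}.
\]
Applying Euler's identity \eqref{Euler-1} to the $i$-sum and the companion summation $\sum_{j\geq 0}z^j/(q^3;q^3)_j=1/(z;q^3)_\infty$ to the $j$-sum then gives
\[
\sum_{m\geq 0}D_m\,x^m=\frac{(-x;q)_\infty}{(-x^3;q^3)_\infty},
\]
so that Theorem~\ref{thm-AU} becomes equivalent to the single-sum identity
\[
\sum_{m\geq 0}q^{\binom{m+1}{2}}D_m=\frac{1}{(q;q^3)_\infty}.
\]

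Next, I would evaluate this reduced statement by a contour integral. Representing $D_m$ as a Cauchy coefficient extraction from $F(y)=(-y;q)_\infty/(-y^3;q^3)_\infty$ and swapping the integral with the $m$-sum (extended bilaterally since $D_m=0$ for $m<0$), Jacobi's triple product $\sum_{m\in\mathbb{Z}}q^{\binom{m+1}{2}}y^{-m}=(-q/y,-y,q;q)_\infty$ converts the target into
\[
\frac{1}{2\pi i}\oint\frac{(-y;q)_\infty\,(-q/y,-y,q;q)_\infty}{(-y^3;q^3)_\infty}\,\frac{dy}{y}.
\]
Exploiting the cubic factorization $(-y^3;q^3)_\infty=(-y;q)_\infty\prod_{k\geq 0}(1-yq^k+y^2q^{2k})$ cancels one copy of $(-y;q)_\infty$ in the integrand, and the remaining expression should collapse, via a quintuple-product type identity, to $1/(q;q^3)_\infty$.

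The main obstacle will be this final contour evaluation. The residual denominator $\prod_{k\geq 0}(1-yq^k+y^2q^{2k})$ has poles at $y=-q^{-k}\omega$ with $\omega$ a primitive sixth root of unity, so collecting their contributions cleanly forces an essential use of the cubic-base Jacobi/quintuple product identities. If this analytic route proves stubborn, a backup plan is to interpret the reduced identity as an instance of Bailey's lemma: one would search for a Bailey pair whose $\beta_n$ matches $D_n$ and whose $\alpha_n$ sums against $q^{n^2}$ to $(q^2,q^3;q^3)_\infty$, trading the analytic difficulty for a targeted search in the Bailey lattice.
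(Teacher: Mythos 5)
Your reduction is correct as far as it goes, and it in fact reproduces the paper's starting point exactly: the splitting $i^2+3ij+\binom{3j+1}{2}=\binom{i+3j+1}{2}+\binom{i}{2}$, the generating function $\sum_m D_mx^m=(-x;q)_\infty/(-x^3;q^3)_\infty$, the triple-product insertion, and the cancellation $(-y^3;q^3)_\infty=(-y;q)_\infty(-\omega y,-\omega^2y;q)_\infty$ all check out, and after the substitution $y=-qz$ your contour integral is precisely the paper's
\begin{equation*}
R(q)=\oint\frac{(qz,1/z,q;q)_\infty}{(\omega qz,\omega^2qz;q)_\infty}\,\frac{dz}{2\pi iz}.
\end{equation*}
The genuine gap is that everything after this point --- which is where the proof actually lives --- is deferred to a hope. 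The assertion that the integral ``should collapse via a quintuple-product type identity'' will not work: the integrand is not a theta quotient, because the denominator contributes two full geometric families of poles $y=e^{\pm i\pi/3}q^{-n}$ (sixth roots of unity times $q^{-n}$; your $-\omega q^{-n}$ has a stray sign), and collecting the residues along either family yields a genuine basic hypergeometric series, not a theta function. This is exactly what Lemma \ref{lem-integral} encodes: with $(A,B,C,D)=(1,1,2,0)$, $a_1=q$, $b_1=1$, $(c_1,c_2)=(\omega q,\omega^2q)$ it gives
\begin{equation*}
R(q)=\frac{(\omega^2;q)_\infty}{1-\omega}\sum_{n\geq0}\frac{(-1)^n\omega^nq^{n(n+1)/2}}{(q,\omega^2q;q)_n}+\frac{(\omega;q)_\infty}{1-\omega^2}\sum_{n\geq0}\frac{(-1)^n\omega^{2n}q^{n(n+1)/2}}{(q,\omega q;q)_n},
\end{equation*}
so the theorem is equivalent to evaluating these two sums, i.e.\ to the identity \eqref{AU-RR}.

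That evaluation is the real content of the proof and is entirely absent from your proposal. The paper obtains it from the transformation $\sum_nq^{\binom n2}(-b)^n/(a,q;q)_n=(b;q)_\infty\sum_nq^{(3n^2-3n)/2}(-ab)^n/(q,a,b;q)_n$ specialized at $(a,b)=(\omega^2q,\omega q)$, which collapses each sum to $(\omega^{\pm1}q;q)_\infty(q^2;q^3)_\infty$ via the pentagonal-type series $\sum_n(-1)^nq^{(3n^2+n)/2}/(q^3;q^3)_n=(q^2;q^3)_\infty$; the identity $(q,\omega q,\omega^2q;q)_\infty=(q^3;q^3)_\infty$ together with $\frac{1-\omega^2}{1-\omega}+\frac{1-\omega}{1-\omega^2}=1$ then finishes the computation. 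Your Bailey-pair alternative is likewise only a search plan, with no candidate pair exhibited. As it stands, you have a correct and useful restatement of the problem as a single contour integral, but not a proof.
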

They also conjectured that \cite[Conjecture 1.2]{Andrews-Uncu}
\begin{align}\label{AU-conj}
\sum_{i,j\geq 0}\frac{(-1)^jq^{\frac{3j(3j+1)}{2}+i^2+3ij+i+j}}{(q;q)_i(q^3;q^3)_j}=\frac{1}{(q^2,q^3;q^6)_\infty}.
\end{align}
Let $\omega=e^{2\pi i/3}$ throughout this paper. We find that \eqref{Andrews-Uncu-id} can be proved through the following Rogers-Ramanujan type identity
\begin{align}\label{AU-RR}
\sum_{n=0}^\infty \frac{(-1)^n\omega^nq^{\frac{1}{2}n(n+1)}}{(q,\omega^2 q;q)_n}=\frac{1}{(q;q^3)_\infty},
\end{align}
which we will prove in this paper. Furthermore, we find that in order to prove the conjectural identity \eqref{AU-conj}, it suffices to show the following  identity.
\begin{theorem}\label{thm-Wangconj}
We have
\begin{align}\label{eq-Wang}
\sum_{n=0}^\infty \frac{(-1)^nq^{\frac{3}{2}n^2+\frac{5}{2}n}(q;q^3)_n}{(q^9;q^9)_n}=\frac{(q^4;q^6)_\infty (q^{12};q^{18})_\infty}{(q^5;q^6)_\infty (q^9;q^{18})_\infty}.
\end{align}
\end{theorem}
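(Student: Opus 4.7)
The strategy mirrors the cube-roots-of-unity trick that presumably powers \eqref{AU-RR}: factor $(q^9;q^9)_n$ into three pieces using $\omega=e^{2\pi i/3}$ and recognize the left-hand side as a classical basic hypergeometric series. From $1-x^3=(1-x)(1-\omega x)(1-\omega^2 x)$ applied at $x=q^{3k}$, one obtains
$$(q^9;q^9)_n=(q^3;q^3)_n(\omega q^3;q^3)_n(\omega^2 q^3;q^3)_n.$$
Regrouping the exponent as $\tfrac{3}{2}n^2+\tfrac{5}{2}n=3\binom{n}{2}+4n$ and working with base $q^3$, the left-hand side of \eqref{eq-Wang} rewrites as the basic hypergeometric series
$${_2\phi_2}\!\bigl(q,0;\omega q^3,\omega^2 q^3;q^3,q^4\bigr),$$
which is for Theorem \ref{thm-Wangconj} the analogue of the $_2\phi_1$ with denominator parameter $\omega^2 q$ featuring in \eqref{AU-RR}.

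Next, I would apply a standard $q$-series transformation — for example Heine's transformation (after suitable manipulation of the zero parameter), one of Jackson's $_2\phi_2$ transformations, or the Rogers-Fine identity — to eliminate the cube-root-of-unity parameters and collapse the sum into a single theta-type product. The $\omega$ is only a bookkeeping device: the factor $(\omega q^3;q^3)_n(\omega^2 q^3;q^3)_n=(q^9;q^9)_n/(q^3;q^3)_n$ is already $\omega$-free, so the $\omega$-dependence must wash out of the final answer, exactly as it does in \eqref{AU-RR} when the sum collapses to $1/(q;q^3)_\infty$. To match against the right-hand side $\tfrac{(q^4;q^6)_\infty(q^{12};q^{18})_\infty}{(q^5;q^6)_\infty(q^9;q^{18})_\infty}$, I would invoke Jacobi's triple product in the forms $(q^9;q^{18})_\infty^2(q^{18};q^{18})_\infty=\sum_{n\in\mathbb{Z}}(-1)^n q^{9n^2}$ together with the analogous theta expansions of $f(-q,-q^5)$, $f(-q^2,-q^4)$, and $f(-q^6,-q^{12})$, then compare coefficients or perform elementary infinite-product rearrangements.

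The main obstacle is the transformation step: the series ${_2\phi_2}\!\bigl(q,0;\omega q^3,\omega^2 q^3;q^3,q^4\bigr)$ does not match any standard summation theorem verbatim, so the correct reduction has to be found. One promising route is to split the sum according to $n\pmod{3}$ via a roots-of-unity filter and recognize each piece as a Rogers-Ramanujan type single sum already in the literature — for instance a base-$q^3$ rescaling of an identity from Slater's list \cite{Slater} or Ramanujan's Lost Notebook \cite{Lost2}. An alternative route is to locate a Bailey pair in \cite{Slater} whose insertion into the Bailey lemma yields exactly the target product, bypassing the $_2\phi_2$ calculation altogether.
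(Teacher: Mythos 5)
Your setup is correct as far as it goes: writing $(q^9;q^9)_n=(q^3,\omega q^3,\omega^2 q^3;q^3)_n$ and recasting the left side of \eqref{eq-Wang} as ${}_2\phi_2\bigl(q,0;\omega q^3,\omega^2 q^3;q^3,q^4\bigr)$ is a valid reformulation (and $3\binom{n}{2}+4n=\tfrac32 n^2+\tfrac52 n$ checks out). But the proof stops exactly where the real work begins: the entire content of the theorem is the evaluation of this series, and you explicitly leave that step open (``the correct reduction has to be found''). None of the tools you name will close it. Heine's transformation, Jackson's ${}_2\phi_2$ transformations and the Rogers--Fine identity manipulate series with generic parameters in a single base and, at best, transform one series into another; a generic ${}_2\phi_2$ has no product evaluation. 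The identity \eqref{eq-Wang} is summable only because of its special \emph{cubic} structure --- the numerator $(q;q^3)_n$ in base $q^3$ against $(q^9;q^9)_n$ in base $q^9$, i.e.\ denominator parameters locked together as $\omega q^3,\omega^2q^3$ --- and rearranging Jacobi triple products on the right-hand side cannot substitute for identifying the summation theorem that produces the left-hand side. Your fallback suggestions (a roots-of-unity filter on $n\bmod 3$, or an unspecified Bailey pair) are not carried out, and there is no evidence either would succeed.

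The missing ingredient in the paper's proof is Rahman's quadratic--cubic summation formula \eqref{eq-Rahman}: setting $a=0$ there yields \eqref{id-key}, and the further specialization $b=q$ (equivalently, Chern's identity \eqref{Chern-gen} with $q\mapsto q^3$ and $a=q$) is precisely \eqref{eq-Wang}. So the theorem reduces to a specific, nontrivial known summation of cubic type --- not to any of the standard single-base transformations you list. Without naming such a result (or proving an equivalent one, as Chern does via a finite form), your argument remains a plan rather than a proof.
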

\begin{rem}
This theorem was posed as a conjecture in the first version of this paper (appeared as arXiv: 2203.15572v1), which contains essentially all materials of this paper except that Theorem \ref{thm-Wangconj} is unproven. After the first draft has been written, we heard from Shane Chern \cite{Chern-communication} that he has independently worked on the identity \eqref{AU-conj} and also discovered \eqref{eq-Wang}. He \cite{Chern} has successfully proved \eqref{eq-Wang} by proving the identity
\begin{align}\label{Chern-gen}
\sum_{n=0}^\infty \frac{(a;q)_n(a^{-1}q^2;q^2)_n}{(a^2q;q^2)_n(q^3;q^3)_n}(-1)^n a^nq^{n(n+1)/2}=\frac{(aq;q^2)_\infty (a^3q^3;q^6)_\infty}{(a^2q;q^2)_\infty (q^3;q^6)_\infty}.
\end{align}
The above theorem follows by replacing $q$ with $q^3$ and setting $a=q$ in \eqref{Chern-gen}.
\end{rem}
Note that Chern proved \eqref{Chern-gen} by proving a finite sum version of it,  we will give a different proof of Theorem \ref{thm-Wangconj} by pointing out that \eqref{Chern-gen} is in fact a direct consequence of a cubic summation formula due to Rahman \cite{Rahman}. As a result, this yields another proof of the conjectural identity \eqref{AU-conj}.
\begin{theorem}\label{thm-conj}
The identity \eqref{AU-conj} holds.
\end{theorem}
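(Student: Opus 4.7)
The proof of Theorem \ref{thm-conj} has two pieces: a reduction of the double sum in \eqref{AU-conj} to a single sum matching the LHS of \eqref{eq-Wang}, and Theorem \ref{thm-Wangconj} itself. The second piece is already in hand, since \eqref{eq-Wang} arises from Chern's identity \eqref{Chern-gen} by specializing $q\mapsto q^3$, $a=q$, and \eqref{Chern-gen} in turn follows from Rahman's cubic summation formula. So the outstanding task is the reduction, which I plan to carry out in a manner parallel to the route from Theorem \ref{thm-AU} via the single-sum identity \eqref{AU-RR}.

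Concretely, my plan is to exploit the cube-root-of-unity factorization $(q^3;q^3)_j=(q;q)_j(\omega q;q)_j(\omega^2 q;q)_j$ in the denominator of the $j$-summand, thereby converting the $j$-sum into a $\omega$-twisted sum on which Euler's identity \eqref{Euler-1} (or a minor variant thereof) can be applied after absorbing one factor into the summation variable. An alternative route is Rosengren's integral method: represent $(q;q)_i^{-1}$ by a contour integral so that the $i$-summation becomes a geometric-type series, and then evaluate the contour integral by residues. Either route, followed by a change of summation variable and a Jacobi triple product rearrangement, should collapse the expression into a single series whose summand matches the LHS of \eqref{eq-Wang}, up to a prefactor consisting of a few infinite $q$-products.

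Once the reduction is in place, Theorem \ref{thm-Wangconj} replaces the single sum by $(q^4;q^6)_\infty(q^{12};q^{18})_\infty/\bigl((q^5;q^6)_\infty(q^9;q^{18})_\infty\bigr)$. Merging with the prefactor reduces Theorem \ref{thm-conj} to a pure infinite-product identity, which I would verify by dissecting each Pochhammer symbol into residue classes modulo $18$ (or another suitable common modulus) and cancelling, arriving at $1/(q^2,q^3;q^6)_\infty$.

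The main obstacle is the reduction step. The bivariate exponent $\tfrac{3j(3j+1)}{2}+i^2+3ij+i+j$ does not factor cleanly in either variable, so neither the $i$-sum nor the $j$-sum evaluates in closed form in isolation; an integral representation in the spirit of Rosengren, or a carefully orchestrated roots-of-unity manipulation, will be required to untangle the two indices. Once the correct single-sum target is identified and the prefactor is bookkept, the remaining product simplification and the appeal to Theorem \ref{thm-Wangconj} are mechanical.
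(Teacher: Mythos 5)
Your overall architecture --- an integral representation of the double sum, a cube-roots-of-unity splitting of the factor $(q^3;q^3)_j$ (equivalently of $(q^4z^3;q^3)_\infty$ in the integrand), and an appeal to Theorem \ref{thm-Wangconj} --- is the right one, and the reduction you flag as the main obstacle does go through via Lemma \ref{lem-integral} with $(A,B,C,D)=(2,1,3,0)$ and $(c_1,c_2,c_3)=(q^{4/3},\omega q^{4/3},\omega^2 q^{4/3})$. But your expectation of what the reduction produces is off in a way that matters. Because $C=3$, the integral evaluates to a sum of \emph{three} single series, not one: only the first of them (after $q\mapsto q^3$) is literally the left side of \eqref{eq-Wang}, while the other two are its images under $q\mapsto\omega q$ and $q\mapsto\omega^2 q$. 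That by itself is harmless, since \eqref{eq-Wang} is an identity of analytic functions and the twisted sums are evaluated by the same substitution.

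The genuine gap is in your endgame. After inserting the three product evaluations, what remains is not ``a pure infinite-product identity'' amenable to regrouping Pochhammer symbols by residue classes: it is an expression of the form $W(q)+W(\omega q)+W(\omega^2 q)$ with $W(q)=f_1f_2/(qf_3f_6)$, i.e.\ you must extract the part of the Laurent series of $(q;q)_\infty(q^2;q^2)_\infty/q$ supported on exponents divisible by $3$. This is an \emph{additive} $3$-dissection of an eta-quotient; since the three products are summed rather than multiplied, ``dissecting each Pochhammer symbol modulo $18$ and cancelling'' cannot reach it. The paper closes this step with Chan's identity \eqref{cubic-id} for Ramanujan's cubic continued fraction, which says exactly that the part of $f_1f_2/(qf_9f_{18})$ with exponents $\equiv 0\pmod 3$ equals $-1$. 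Without this (or an equivalent theta-function identity) your plan stalls at the last step, which --- apart from Theorem \ref{thm-Wangconj} itself --- is the only non-mechanical ingredient of the proof.
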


Finally, we discuss a set of identities with index $(1,4)$, which can be found in the work of Kur\c{s}ung\"{o}z \cite[Corollary 18]{Kursungoz}.
\begin{theorem}\label{thm14}
We have
\begin{align}
\sum_{i,j\geq 0}\frac{q^{(3i^2-i)/2+4ij+4j^2}}{(q;q)_{i}(q^4;q^4)_{j}}&=\frac{1}{(q,q^4,q^7;q^8)_\infty}, \label{KR21} \\
\sum_{i,j\geq 0}\frac{q^{(3i^2+3i)/2+4ij+4j^2+4j}}{(q;q)_{i}(q^4;q^4)_{j}}&=\frac{1}{(q^3,q^4,q^5;q^8)_\infty}, \label{KR22} \\
\sum_{i,j\geq 0}\frac{q^{(3i^2+i)/2+4ij+4j^2+2j}(1+q^{2i+4j+1})}{(q;q)_{i}(q^4;q^4)_{j}}&=\frac{1}{(q,q^5,q^6;q^8)_\infty}, \label{KR23} \\
\sum_{i,j\geq 0}\frac{q^{(3i^2+i)/2+4ij+4j^2+2j}}{(q;q)_{i}(q^4;q^4)_{j}}&=\frac{1}{(q^2,q^3,q^7;q^8)_\infty}. \label{KR24}
\end{align}
\end{theorem}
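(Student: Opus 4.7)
The plan is to reduce each of the four identities in Theorem \ref{thm14} to a known single-sum Rogers-Ramanujan type identity via a single mechanism. The first observation is that, upon introducing $n := i + 2j$, the quadratic exponents decompose as
\begin{align*}
(3i^2-i)/2+4ij+4j^2 &= \binom{i}{2}+n^2,\\
(3i^2+3i)/2+4ij+4j^2+4j &= \binom{i}{2}+n^2+2n,\\
(3i^2+i)/2+4ij+4j^2+2j &= \binom{i}{2}+n(n+1),
\end{align*}
exhibiting exactly the exponents appearing in (\ref{Slater36}), (\ref{Slater34}), and (\ref{Gollnitz24}). The crux is then the auxiliary identity
\[
\sum_{\substack{i,j\geq 0\\ i+2j=n}}\frac{q^{\binom{i}{2}}}{(q;q)_i(q^4;q^4)_j} = \frac{(-q;q^2)_n}{(q^2;q^2)_n} =: T_n,
\]
which I plan to establish by generating-function matching.

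For that matching, multiplying by $x^n$ and summing yields, by Euler's two identities,
\[
\sum_{i,j\geq 0}\frac{q^{\binom{i}{2}}x^{i+2j}}{(q;q)_i(q^4;q^4)_j} = \Big(\sum_i \frac{q^{\binom{i}{2}}x^i}{(q;q)_i}\Big)\Big(\sum_j \frac{x^{2j}}{(q^4;q^4)_j}\Big) = \frac{(-x;q)_\infty}{(x^2;q^4)_\infty}.
\]
The elementary factorizations $(-x;q)_\infty = (-x;q^2)_\infty(-qx;q^2)_\infty$ and $(x^2;q^4)_\infty = (x;q^2)_\infty(-x;q^2)_\infty$ collapse this to $(-qx;q^2)_\infty/(x;q^2)_\infty$, which by the $q$-binomial theorem expands as $\sum_n T_n x^n$. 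Matching $x^n$ coefficients proves the auxiliary identity, and substitution converts the left sides of (\ref{KR21}), (\ref{KR22}), and (\ref{KR24}) directly into those of (\ref{Slater36}), (\ref{Slater34}), and (\ref{Gollnitz24}).

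The identity (\ref{KR23}) shares the exponent of (\ref{KR24}) but carries an extra factor $(1+q^{2i+4j+1})=(1+q^{2n+1})$. Using $(1+q^{2n+1})T_n = (-q;q^2)_{n+1}/(q^2;q^2)_n = (1+q)(-q^3;q^2)_n/(q^2;q^2)_n$, the left side of (\ref{KR23}) becomes $(1+q)\sum_n q^{n(n+1)}(-q^3;q^2)_n/(q^2;q^2)_n$. I would then evaluate this inner sum by expanding $(-q^3;q^2)_n$ via the $q$-binomial theorem, swapping the order of summation, and applying Euler twice: first the inner Euler $\sum_l q^{l(l+1)}z^l/(q^2;q^2)_l = (-q^2 z;q^2)_\infty$ with $z=q^{2k}$, then a second Euler $\sum_k q^{2k^2+3k}/(q^4;q^4)_k = (-q^5;q^4)_\infty$. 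This produces $(-q^2;q^2)_\infty(-q^5;q^4)_\infty$, and standard product manipulations via $(-q^2;q^2)_\infty = 1/(q^2;q^4)_\infty$, $(-q^5;q^4)_\infty = (q^{10};q^8)_\infty/(q^5;q^4)_\infty$, and $(q;q^8)_\infty = (1-q)(q^9;q^8)_\infty$ should reduce $(1+q)(-q^2;q^2)_\infty(-q^5;q^4)_\infty$ to $1/(q,q^5,q^6;q^8)_\infty$, the right side of (\ref{KR23}). The delicate step I anticipate is the bookkeeping in this last product-level simplification, but no tool beyond Euler, the $q$-binomial theorem, and such basic product splittings is required.
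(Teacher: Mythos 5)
Your proposal is correct and follows essentially the same route as the paper: your auxiliary identity $\sum_{i+2j=n}q^{\binom{i}{2}}/\bigl((q;q)_i(q^4;q^4)_j\bigr)=(-q;q^2)_n/(q^2;q^2)_n$ is exactly the content of the paper's key step (there packaged as the contour integral $F(x,x^2;q)$ and evaluated via the same cancellation $(-x;q)_\infty/(x^2;q^4)_\infty=(-qx;q^2)_\infty/(x;q^2)_\infty$ plus the $q$-binomial theorem), after which the four identities reduce to \eqref{Slater36}, \eqref{Slater34}, \eqref{Gollnitz24} and, for \eqref{KR23}, the same product evaluation. The only differences are presentational: you extract coefficients directly rather than through the integral formalism, and you re-derive the needed special case of Lebesgue's identity \eqref{Lebesgue} by a double Euler summation where the paper simply cites it.
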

We will provide new proofs of these identities using the integral method, and by this we see that they are related to the identities \eqref{Slater34}--\eqref{Gollnitz24}.

The paper is organized as follows. In Section \ref{sec-pre} we collect some useful $q$-series formulas which will be used to derive the identities. In Section \ref{sec-proof} we present new proofs for Theorems \ref{thm-UZ}, \ref{thm-AU}, \ref{thm14} and give a proof for Theorem \ref{thm-conj}. Finally, in Section \ref{sec-conclude} we propose some conjectural identities with index $(1,2,3,4)$.

\section{Preliminaries}\label{sec-pre}
Besides \eqref{Euler-1}, we also need another identity due to Euler
\begin{align}\label{Euler-2}
\sum_{n=0}^\infty \frac{z^n}{(q;q)_n}=\frac{1}{(z;q)_n}, \quad |z|<1
\end{align}
and the Jacobi triple product identity
\begin{align}\label{Jacobi}
(q,z,q/z;q)_\infty=\sum_{n=-\infty}^\infty (-1)^nq^{\binom{n}{2}}z^n.
\end{align}
They will be used frequently to express multi-sums as integrals of infinite products. Both \eqref{Euler-1} and \eqref{Euler-2} can be deduced from the $q$-binomial theorem: for $|q|,|z|<1$,
\begin{align}\label{q-binomial}
\sum_{n=0}^\infty \frac{(a;q)_n}{(q;q)_n}z^n=\frac{(az;q)_\infty}{(z;q)_\infty}.
\end{align}

One of the key step in establishing Rogers-Ramanujan type identities by the integral method is to evaluate integrals of infinite $q$-products  and express the result as infinite $q$-products. We will mainly use an identity from the book of Gasper and Rahman \cite{GR-book}. Before we state this result, we remark that the symbol ``idem $(c_1;c_2,\dots,c_C)$'' after an expression stands for the sum of the $(C-1)$ expressions obtained from the preceding expression by interchanging $c_1$ with each $c_k$, $k=2,3,\dots,C$.

\begin{lemma}\label{lem-integral}
(Cf.\ \cite[Eq.\ (4.10.6)]{GR-book})
Suppose that
$$P(z):=\frac{(a_1z,\dots,a_Az,b_1/z,\dots,b_B/z;q)_\infty}{(c_1z,\dots,c_Cz,d_1/z,\dots,d_D/z;q)_\infty}$$
has only simple poles. We have
\begin{align}\label{eq-integral}
\oint P(z)\frac{dz}{2\pi iz}=& \frac{(b_1c_1,\dots,b_Bc_1,a_1/c_1,\dots,a_A/c_1;q)_\infty }{(q,d_1c_1,\dots,d_Dc_1,c_2/c_1,\dots,c_C/c_1;q)_\infty} \nonumber \\
& \times \sum_{n=0}^\infty \frac{(d_1c_1,\dots,d_Dc_1,qc_1/a_1,\dots,qc_1/a_A;q)_n}{(q,b_1c_1,\dots,b_Bc_1,qc_1/c_2,\dots,qc_1/c_C;q)_n} \nonumber \\
&\times \Big(-c_1q^{(n+1)/2}\Big)^{n(C-A)}\Big(\frac{a_1\cdots a_A}{c_1\cdots c_C} \Big)^n +\text{idem} ~(c_1;c_2,\dots,c_C)
\end{align}
when $C>A$, or if $C=A$ and
\begin{align}\label{cond}
\left|\frac{a_1\cdots a_A }{c_1\cdots c_C}\right|<1.
\end{align}
Here the integration is over a positively oriented contour so that the poles of $(c_1z,\dots,c_Cz;q)_\infty^{-1}$ lie outside the contour, and the origin and poles of $(d_1/z,\dots,d_D/z;q)_\infty^{-1}$ lie inside the contour.
\end{lemma}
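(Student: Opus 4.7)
The plan is to prove the lemma by contour deformation combined with residue calculus, which is the standard technique for $q$-integrals of Mellin--Barnes type. Let $C_0$ denote the given contour; it separates the \emph{outer} poles of $P(z)/z$, located at $z=q^{-n}/c_j$ for $n\geq 0$, $1\leq j\leq C$, from the \emph{inner} poles at $z=d_k q^n$ ($n\geq 0$, $1\leq k\leq D$) together with the origin. The idea is to expand $C_0$ outward to a large circle $C_R$ of radius $R$ and apply the residue theorem, giving
\begin{align*}
\oint_{C_0}P(z)\,\frac{dz}{2\pi i z} = \oint_{C_R}P(z)\,\frac{dz}{2\pi i z} \;-\; \sum_{j=1}^{C}\sum_{n\geq 0}\mathop{\mathrm{Res}}_{z=q^{-n}/c_j}\frac{P(z)}{z}.
\end{align*}
Thus the lemma reduces to showing $\oint_{C_R}\to 0$ along a suitable sequence $R_N\to\infty$ and then evaluating the residue sum in closed form.

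For the asymptotic step I would take $R_N=|q|^{-N-1/2}r_0$ with $r_0$ calibrated so that each circle $|z|=R_N$ stays a bounded distance from every outer pole. Using the factorization
\begin{align*}
(az;q)_\infty = (-az)^{N}q^{\binom{N}{2}}\bigl(q^{1-N}/(az);q\bigr)_{N}(azq^{N};q)_\infty,
\end{align*}
together with the analogous identity for the denominator $c_j$-factors, one sees that on $|z|=R_N$ the factors $(b_\beta/z;q)_\infty$, $(d_k/z;q)_\infty$, $(q^{1-N}/(\cdot z);q)_N$, and $(\cdot zq^N;q)_\infty$ are all bounded. Collecting the leading powers,
\begin{align*}
|P(z)|\;\lesssim\;\Bigl|\frac{a_1\cdots a_A}{c_1\cdots c_C}\Bigr|^{N}|q|^{(C-A)N(N+1)/2},
\end{align*}
with implied constant independent of $N$. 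When $C>A$ the $q$-factor forces super-exponential decay in $N$; when $C=A$, the hypothesis $|a_1\cdots a_A/(c_1\cdots c_C)|<1$ supplies geometric decay. Either way $\oint_{C_{R_N}}\to 0$.

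For the residue step I would compute the contribution at the generic outer pole $z_0:=q^{-n}/c_1$; the other $c_j$ contributions follow by the idem symmetry. The pole is simple by hypothesis, so the residue is obtained by removing the single vanishing factor $1-c_1zq^n$ from $(c_1z;q)_\infty$ and substituting $z=z_0$ into the remainder. Each shifted Pochhammer symbol is then reduced by the standard $q$-reflection identities
\begin{align*}
(aq^{-n};q)_\infty = (-a)^{n}q^{-\binom{n+1}{2}}(q/a;q)_{n}(a;q)_\infty, \qquad (aq^{n};q)_\infty = \frac{(a;q)_\infty}{(a;q)_n},
\end{align*}
which separate an $n$-independent piece (matching the global prefactor $(b_1c_1,\dots,a_A/c_1;q)_\infty/(q,d_1c_1,\dots,c_C/c_1;q)_\infty$ of the lemma) from an $n$-dependent piece. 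Careful bookkeeping of the net powers of $q$, $(-1)$, $c_1$, and the ratios $a_\alpha/c_j$ yields exactly the summand $(-c_1q^{(n+1)/2})^{n(C-A)}(a_1\cdots a_A/(c_1\cdots c_C))^n$ displayed in the lemma, with the overall sign absorbing the minus in $\oint_{C_0}=\oint_{C_R}-\sum\mathrm{Res}$.

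The main obstacle is the asymptotic estimate in the first step: the radii $R_N$ must be tuned to simultaneously avoid all $C$ families of outer poles, and the bound on $|P(z)|$ must be sharp enough to match the condition \eqref{cond} in the borderline case $C=A$ where no slack is available. Once $\oint_{C_{R_N}}\to 0$ is secured, the residue bookkeeping in the second step, though tedious, is purely mechanical combinatorics of $q$-Pochhammer symbols.
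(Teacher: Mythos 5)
The paper gives no proof of this lemma at all---it is quoted directly from Gasper and Rahman \cite[Eq.\ (4.10.6)]{GR-book}---and your outline is exactly the standard derivation behind that formula: push the contour out through circles $|z|=R_N$ chosen to avoid the poles, show the integral over them vanishes (immediately from the $q$-power when $C>A$, and from \eqref{cond} when $C=A$), and sum the residues at $z=q^{-n}/c_j$. Your reflection identities and bookkeeping do reproduce the stated prefactor and the summand $\bigl(-c_1q^{(n+1)/2}\bigr)^{n(C-A)}\bigl(a_1\cdots a_A/(c_1\cdots c_C)\bigr)^n$, so the plan is sound; the only blemish is the precise exponent of $|q|$ in your bound for $|P(z)|$, which depends on the normalization of $R_N$ but is harmless in either case.
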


Heine's transformation formula \cite[p.\ 13, Eq.\ (1.4.3)]{GR-book} states that for $|t|<1$,
\begin{align}\label{Heine}
\sum_{n=0}^\infty \frac{(a,b;q)_n}{(c,q;q)_n}t^n=\frac{(abt/c;q)_\infty}{(t;q)_\infty} \sum_{n=0}^\infty \frac{(\frac{c}{a},\frac{c}{b};q)_n}{(c,q;q)_n}\left(\frac{abt}{c}\right)^n.
\end{align}
We also need Lebesuge's identity (see \cite[p.\ 21, Cor.\ 2.7]{Andrews}, for example)
\begin{align}\label{Lebesgue}
\sum_{n=0}^\infty \frac{q^{n(n+1)/2}(a;q)_n}{(q;q)_n}=(aq;q^2)_\infty (-q;q)_\infty.
\end{align}
We will occasionally use the notation $f_m:=(q^m;q^m)_\infty$ when doing calculations. The following lemma will be used in some proofs.
\begin{lemma}\label{lem-dissection}
We have
\begin{align}
\frac{f_3}{f_1^3}&=\frac{f_4^6f_6^3}{f_2^9f_{12}^2}+3q\frac{f_4^2f_6f_{12}^2}{f_2^7}, \label{f13-1} \\
\frac{f_1^3}{f_3}&=\frac{f_4^3}{f_{12}}-3q\frac{f_2^2f_{12}^3}{f_4f_6^2}. \label{f13-2}
\end{align}
\end{lemma}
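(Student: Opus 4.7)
Both identities in Lemma~\ref{lem-dissection} are classical $2$-dissection formulas. Identity \eqref{f13-2} splits the Borwein cubic theta function $b(q):=f_1^3/f_3$ into its even and odd parts with respect to $q$, and \eqref{f13-1} does the same for $1/b(q)=f_3/f_1^3$; both can be found, for instance, in Hirschhorn's monograph \emph{The Power of $q$}. My plan is to prove \eqref{f13-2} directly from Jacobi's identity and then deduce \eqref{f13-1} from it by a short algebraic manipulation.

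\textbf{Step 1 (prove \eqref{f13-2}).} I would start from Jacobi's identity $f_1^3=\sum_{n\geq 0}(-1)^n(2n+1)q^{n(n+1)/2}$ and split the sum by the residue of $n$ modulo $4$. The exponent $n(n+1)/2$ is even exactly when $n\equiv 0,3\pmod 4$, so this partitions the series into its even and odd parts in $q$. Shifting the index in one residue class of each pair collapses each half into a bilateral sum of the form $\sum_{m\in\mathbb Z}(Am+B)q^{\alpha m^2+\beta m+\gamma}$; concretely the even part becomes $\sum_{m\in\mathbb Z}(8m+1)q^{8m^2+2m}$ and the odd part becomes $q\sum_{m\in\mathbb Z}(8m-3)q^{8m^2-6m}$. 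Each bilateral sum is (up to a rescaling) a logarithmic derivative in an auxiliary parameter of a Jacobi triple product, and so admits an infinite-product evaluation; carrying these evaluations out would identify the even part with $f_3f_4^3/f_{12}$ and the odd part with $-3q\,f_2^2 f_3 f_{12}^3/(f_4 f_6^2)$. Dividing both sides by $f_3$ then gives \eqref{f13-2}.

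\textbf{Step 2 (derive \eqref{f13-1} from \eqref{f13-2}).} Write \eqref{f13-2} as $b(q)=A-3qB$, where $A:=f_4^3/f_{12}$ and $B:=f_2^2 f_{12}^3/(f_4 f_6^2)$ are both series in $q^2$, so that $b(-q)=A+3qB$. By separating odd and even factors of the $q$-Pochhammer symbol one has $f_1(-q)=f_2^3/(f_1 f_4)$ and $f_3(-q)=f_6^3/(f_3 f_{12})$, and hence
\[ b(q)\,b(-q)\;=\;\frac{f_1^3}{f_3}\cdot\frac{f_2^9 f_3 f_{12}}{f_1^3 f_4^3 f_6^3}\;=\;\frac{f_2^9 f_{12}}{f_4^3 f_6^3}. \]
Therefore $1/b(q)=b(-q)/[b(q)b(-q)]=(A+3qB)\cdot f_4^3 f_6^3/(f_2^9 f_{12})$, and expanding this product gives exactly the right-hand side of \eqref{f13-1}.

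\textbf{Main obstacle.} The real substance of the argument is in Step~1, where I must convert the two bilateral sums obtained after the $\bmod\,4$ split into the named infinite products; this requires a careful application of the Jacobi triple product identity together with a parameter-derivative trick, and this is where essentially all of the calculation lies. Step~2 is then purely algebraic.
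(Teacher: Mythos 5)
Your Step 2 is correct and complete: $f_1(-q)=f_2^3/(f_1f_4)$ and $f_3(-q)=f_6^3/(f_3f_{12})$ give $b(q)b(-q)=f_2^9f_{12}/(f_4^3f_6^3)$, and multiplying $b(-q)=A+3qB$ by $f_4^3f_6^3/(f_2^9f_{12})$ reproduces the right-hand side of \eqref{f13-1} term by term. So \eqref{f13-1} really does follow from \eqref{f13-2}. (For calibration: the paper does not prove this lemma at all; it cites Xia--Yao for \eqref{f13-1} and Hirschhorn--Garvan--Borwein for \eqref{f13-2}, so you are attempting strictly more than the authors do.)

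Step 1, however, has a genuine gap. The mod-$4$ split of Jacobi's series is fine: the even part of $f_1^3$ is $\sum_{m\in\mathbb{Z}}(8m+1)q^{8m^2+2m}=1-7q^6+9q^{10}-\cdots$ and the odd part is $-q\sum_{m\in\mathbb{Z}}(8m+3)q^{8m^2+6m}=-3q+5q^3-\cdots$ (equivalent to your form under $m\mapsto -m$). But the proposed product evaluations of these pieces are false. Since $f_3$ is not a series in $q^2$, the functions $f_3f_4^3/f_{12}$ and $-3q\,f_2^2f_3f_{12}^3/(f_4f_6^2)$ are not even and odd in $q$: indeed $f_3f_4^3/f_{12}=1-q^3-3q^4-\cdots$ already contains odd powers, and its coefficient of $q^6$ is $-1$, not $-7$. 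Put differently, the identity $f_1^3=f_3A-3qf_3B$ is of course true (it is \eqref{f13-2} times $f_3$), but its two terms are \emph{not} the even and odd parts of $f_1^3$, so the identification you plan to ``carry out'' is a false statement and no amount of triple-product manipulation will establish it. (A further warning: a bilateral sum $\sum_m(Am+B)q^{\alpha m^2+\beta m}$ is in general not an infinite product; the parameter-derivative of a Jacobi triple product is a product times a Lambert-type series, and only exceptional cases such as $f_1^3$ itself collapse to a pure product.) To rescue the plan you should $2$-dissect $b(q)=f_1^3/f_3$ itself rather than $f_1^3$: from the Borwein representation $b(q)=\sum_{m,n\in\mathbb{Z}}\omega^{m-n}q^{m^2+mn+n^2}$, the form $m^2+mn+n^2$ is even exactly when $m$ and $n$ are both even, so the even part of $b(q)$ is $b(q^4)=f_4^3/f_{12}$ immediately; the remaining work is to show the odd part $b(q)-b(q^4)$ equals $-3q\,f_2^2f_{12}^3/(f_4f_6^2)$, which is essentially the content of the Hirschhorn--Garvan--Borwein paper the authors cite.
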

Proofs of \eqref{f13-1} and \eqref{f13-2} can be found in \cite{Xia} and \cite{Hirschhorn}, respectively.

\section{Proofs of the theorems}\label{sec-proof}

In this section, we will present proofs of the main results stated in Section \ref{sec-intro}. Throughout this section, the contours of integrals will be chosen in the way as in Lemma \ref{lem-integral}.

\begin{proof}[Proof of Theorem \ref{thm-UZ}]
We shall consider
\begin{align}
F_1(u,v;q):=\oint \frac{(uz,qz,1/z,q;q)_\infty}{(vz,-vz;q)_\infty} \frac{dz}{2\pi iz}.
\end{align}
By \eqref{Euler-1} and \eqref{Euler-2} we see that
\begin{align}
F_1(u,v;q)&=\oint \sum_{i=0}^\infty \frac{(-1)^i q^{\binom{i}{2}}u^iz^i}{(q;q)_i} \sum_{j=0}^\infty \frac{v^{2j}z^{2j}}{(q^2;q^2)_j} \sum_{k=-\infty}^\infty (-1)^kq^{\binom{k}{2}}z^{-k} \frac{dz}{2\pi iz} \nonumber \\
&=\sum_{i,j\geq 0}\frac{u^iv^{2j}q^{i^2+2ij+2j^2-i-j}}{(q;q)_i (q^2;q^2)_j}. \label{F1-exp}
\end{align}

To prove \eqref{UZ-1}, we choose $u=q$ and $v=q^{\frac{1}{2}}$. Setting
$$(A,B,C,D)=(2,1,2,0), \quad a_1=a_2=q, \quad b_1=1, \quad (c_1,c_2)=(q^{1/2}, -q^{1/2})$$
in \eqref{eq-integral}, we deduce that
\begin{align}
F_1(q,q^{1/2};q)=(q;q)_\infty \left(\frac{(q^{\frac{1}{2}};q)_\infty^3}{(q,-1;q)_\infty} S_1(q)+\frac{(-q^{\frac{1}{2}};q)_\infty^3}{(q,-1;q)_\infty}S_2(q)\right),
\end{align}
where
$$S_1(q)=\sum_{n=0}^\infty \frac{(q^{\frac{1}{2}};q)_n(-q)^n}{(q,-q;q)_n}, \quad S_2(q)=\sum_{n=0}^\infty \frac{(-q^{\frac{1}{2}};q)_n(-q)^n}{(q,-q;q)_n}.$$
By Heine's transformation \eqref{Heine} with $(q,a,b,c,t)\rightarrow (q^2,q,0,-q^2,-q^2)$, we obtain
\begin{align}
S_1(q^2)=\frac{1}{(-q^2;q^2)_\infty}\sum_{n=0}^\infty \frac{(-q;q^2)_n q^{n^2+2n}}{(-q^2,q^2;q^2)_n}=\frac{f_2f_6^2}{f_3f_4^2},
\end{align}
where the last equality follows from \eqref{Rama-1} with $q$ replaced by $-q$.

Replacing $q$ by $-q$ in the expression of $S_1(q^2)$ we get $S_2(q^2)$ and hence
\begin{align}
S_2(q^2)=\frac{f_2f_3f_{12}}{f_4^2f_6}.
\end{align}
Hence

\begin{align}
F_1(q^2,q;q^2)&=(q^2;q^2)_\infty \left(\frac{(q;q^2)_\infty^3}{2(q^4;q^4)_\infty}S_1(q^2)+\frac{(-q;q^2)_\infty^3}{2(q^4;q^4)_\infty}S_2(q^2) \right) \nonumber  \\
&=\frac{1}{2}f_2 \left(\frac{f_1^3}{f_3} \cdot \frac{f_6^2}{f_2^2f_4^3} +\frac{f_3}{f_1^3}\cdot \frac{f_2^7f_{12}}{f_4^6f_6}  \right) \nonumber \\
&=\frac{f_6^2}{f_2f_{12}}.
\end{align}
Here the last equality follows by substitution of \eqref{f13-1} and \eqref{f13-2} and simplification. Replacing $q$ by $q^{\frac{1}{2}}$, we obtain \eqref{UZ-1}.

To prove \eqref{UZ-2}, we need to evaluate $F_1(q^2,q^{\frac{3}{2}};q)$. But in this case Lemma \ref{lem-integral} is not applicable since $C=A$ and the condition \eqref{cond} does not hold. Instead,  we shall use the integral
\begin{align}
F_2(u,v;q):=\oint \frac{(vz,q^2z,1/z,q^2;q^2)_\infty}{(uz;q)_\infty} \frac{dz}{2\pi iz}.
\end{align}
By \eqref{Euler-1}, \eqref{Euler-2} and \eqref{Jacobi} we have
\begin{align}
F_2(u,v;q)&=\oint \sum_{i=0}^\infty \frac{u^iz^i}{(q;q)_i} \sum_{j=0}^\infty \frac{(-1)^jz^jv^jq^{j^2-j}}{(q^2;q^2)_j}\sum_{\ell=-\infty}^\infty (-1)^\ell q^{\ell^2-\ell}z^{-\ell} \frac{dz}{2\pi iz} \nonumber \\
&=\sum_{i,j\geq 0} \frac{(-1)^iu^iv^jq^{i^2+2ij+2j^2-i-2j}}{(q;q)_i(q^2;q^2)_j}.
\end{align}
Now we set $u=-q^2,v=q^4$. Replacing $q$ by $q^2$ in \eqref{eq-integral} and letting
$$(A,B,C,D)=(2,1,2,0), \quad (a_1,a_2)=(q^2, q^4), \quad b_1=1, \quad (c_1,c_2)=(-q^2, -q^3),$$
we deduce that
\begin{align}\label{F2-start}
F_2(-q^2,q^4;q)=2\frac{(-q^2;q^2)_\infty^3}{(q;q)_\infty} T_1(q)-\frac{(-q;q^2)_\infty^3}{(q;q)_\infty}T_2(q),
\end{align}
where
\begin{align}
T_1(q)=\sum_{n=0}^\infty \frac{(-1;q^2)_nq^n}{(q;q)_{2n}}, \quad T_2(q)=\sum_{n=0}^\infty \frac{(-q;q^2)_nq^n}{(q;q)_{2n+1}}.
\end{align}
Now by Heine's transformation \eqref{Heine} with $(q,a,b,c,t)\rightarrow (q^2,-1,0,q,q) $ and \eqref{Rama-3}, we deduce that
\begin{align}
T_1(q)=\frac{1}{(q;q^2)_\infty}\sum_{n=0}^\infty \frac{(-q;q^2)_nq^{n^2}}{(q,q^2;q^2)_n}=\frac{f_2f_6^2}{f_1^2f_{12}}. \label{T1-result}
\end{align}
Similarly, using \eqref{Heine} with $(q,a,b,c,t)\rightarrow (q^2,-q,0,q^3,q)$ and \eqref{Rama-2}, we deduce that
\begin{align}
T_2(q)=\frac{1}{(q;q^2)_\infty} \sum_{n=0}^\infty \frac{(-q^2;q^2)_nq^{n^2+n}}{(q;q)_{2n+1}}=\frac{f_2f_3f_{12}}{f_1^2f_6}. \label{T2-result}
\end{align}
Now substituting \eqref{T1-result} and \eqref{T2-result} into \eqref{F2-start},  we deuce that
\begin{align*}
F_2(-q^2,q^4;q)&=2\frac{f_4^3f_6^2}{f_1^3f_2^2f_{12}}-\frac{f_2^7f_3f_{12}}{f_1^6f_4^3f_6} \\
&=\frac{1}{f_1^3}\left(2\frac{f_4^3f_6^2}{f_2^2f_{12}}-\frac{f_2^7f_{12}}{f_4^3f_6}\cdot \frac{f_3}{f_1^3} \right)  \quad \text{(use \eqref{f13-1})}\\
&=\frac{1}{f_1^3}\left(2\frac{f_4^3f_6^2}{f_2^2f_{12}}-\frac{f_2^7f_{12}}{f_4^3f_6}\cdot \Big( \frac{f_4^6f_6^3}{f_2^9f_{12}^2}+3q\frac{f_4^2f_6f_{12}^2}{f_2^7} \Big) \right) \nonumber \\
&=\frac{f_6^2}{f_1^3f_2^2}\left(\frac{f_4^3}{f_{12}}-3q\frac{f_2^2f_{12}^3}{f_4f_6^2}  \right) \quad \text{(by \eqref{f13-2})} \nonumber \\
&=\frac{f_6^2}{f_2^2f_3}.
\end{align*}

Finally, to prove \eqref{UZ-3}, setting $u=v=q$ in \eqref{F1-exp}, we see that the left side of \eqref{UZ-3} is
\begin{align*}
F_1(q,q;q)&=\oint \frac{(qz,1/z,q;q)_\infty}{(-qz;q)_\infty}\frac{dz}{2\pi iz} \nonumber \\
&=\oint \sum_{k=0}^\infty \frac{(-qz)^k}{(q;q)_k} \sum_{\ell=-\infty}^\infty (-1)^\ell q^{\binom{\ell}{2}}z^{-\ell} \frac{dz}{2\pi iz} \\
&=\sum_{k=0}^\infty \frac{q^{k(k+1)/2}}{(q;q)_k}=(-q;q)_\infty.
\end{align*}
Here the second and last equalities follow from \eqref{Euler-1} and \eqref{Euler-2}.
\end{proof}
\begin{rem}
For \eqref{UZ-3}, we cannot use \eqref{eq-integral} to evaluate $F_1(q,q;q)$ directly since $A=C=1$ and $|a_1/c_1|=1$.
\end{rem}

\begin{proof}[Proof of Theorem \ref{thm-AU}]
We start with the integral
\begin{align}
R(q)&:=\oint \sum_{i=0}^\infty \frac{q^{\frac{1}{2}(i^2+i)}(-1)^iz^i}{(q;q)_i} \sum_{j=0}^\infty \frac{q^{3j}z^{3j}}{(q^3;q^3)_j} \sum_{k=-\infty}^\infty (-1)^k q^{\frac{1}{2}(k^2-k)}z^{-k} \frac{dz}{2\pi iz} \nonumber \\
&=\sum_{i,j\geq 0} \frac{(-1)^jq^{i^2+3ij+\frac{3}{2}j(3j+1)}}{(q;q)_i(q^3;q^3)_j}.
\end{align}
By \eqref{Euler-1}, \eqref{Euler-2} and \eqref{Jacobi}, we can write
\begin{align}
R(q)=\oint \frac{(qz,qz,1/z,q;q)_\infty}{(q^3z^3;q^3)_\infty}\frac{dz}{2\pi iz}  =\oint \frac{(qz,1/z,q;q)_\infty}{(\omega qz,\omega^2qz;q)_\infty}\frac{dz}{2\pi iz}.
\end{align}
Now applying \eqref{eq-integral} with
$$(A,B,C,D)=(1,1,2,0), \quad a_1=q, \quad b_1=1, \quad (c_1,c_2)=(\omega q, \omega^2q),$$
we deduce that
\begin{align}
R(q)=\frac{(\omega^2;q)_\infty}{1-\omega}R_1(q)+\frac{(\omega;q)_\infty}{1-\omega^2} R_2(q), \label{R-exp}
\end{align}
where
\begin{align}
R_1(q)&=\sum_{n=0}^\infty \frac{(-1)^n \omega^n q^{\frac{1}{2}n(n+1)}}{(q,\omega^2q;q)_n}, \\
R_2(q)&=\sum_{n=0}^\infty \frac{(-1)^n\omega^{2n}q^{\frac{1}{2}n(n+1)}}{(q,\omega q;q)_n}.
\end{align}
From \cite[Eq.\ (6.1.11)]{Sills} we find
\begin{align}
\sum_{n=0}^\infty \frac{q^{\frac{1}{2}n(n-1)}(-b)^n}{(a,q;q)_n}=(b;q)_\infty \sum_{n=0}^\infty \frac{q^{\frac{1}{2}(3n^2-3n)}(-ab)^n}{(q,a,b;q)_n}.
\end{align}
Setting $a=\omega^2q$ and $b=\omega q$ in the above identity and using \eqref{Euler-1}, we obtain
\begin{align}
R_1(q)=(\omega q;q)_\infty \sum_{n=0}^\infty \frac{(-1)^n q^{\frac{1}{2}(3n^2+n)}}{(q^3;q^3)_n}=(\omega q;q)_\infty (q^2;q^3)_\infty. \label{R1-exp}
\end{align}
Replacing $\omega$ by $\omega^2$, we obtain
\begin{align}
R_2(q)=(\omega^2q;q)_\infty (q^2;q^3)_\infty. \label{R2-exp}
\end{align}
Substituting \eqref{R1-exp} and \eqref{R2-exp} into \eqref{R-exp}, we deduce that
\begin{align*}
R(q)=(q^2;q^3)_\infty (\omega q;q)_\infty (\omega^2q;q)_\infty \left( \frac{1-\omega^2}{1-\omega}+\frac{1-\omega}{1-\omega^2} \right)=\frac{1}{(q;q^3)_\infty}.
\end{align*}
\end{proof}

\begin{proof}[Proof of Theorem \ref{thm-Wangconj}]
Recall the following formula found by Rahman \cite{Rahman}:
\begin{align}
&\sum_{n=0}^\infty \frac{(1-aq^{5n})(a,b;q^2)_n(ab^2/q^3;q^3)_n(q^2/b;q)_n(aq^3/b;q^6)_n}{(1-a)(q^3,aq^3/b;q^3)_n(q^5/b^2;q^2)_n(ab,abq^2;q^4)_n}\left(-\frac{q^2}{b}\right)^nq^{\binom{n+1}{2}} \nonumber \\
&=\frac{(aq^2,q^3/b;q^2)_\infty (ab^2,q^9/b^3;q^6)_\infty}{(ab,q^5/b^2;q^2)_\infty (q^3,aq^6/b;q^6)_\infty}. \label{eq-Rahman}
\end{align}
This formula also appears as \cite[Exercise 3.29(i)]{GR-book} except that a typo has been corrected here.

Setting $a=0$ in \eqref{eq-Rahman}, we obtain
\begin{align}\label{id-key}
\sum_{n=0}^\infty \frac{(q^2/b;q)_n(b;q^2)_n}{(q^5/b^2;q^2)_n(q^3;q^3)_n}(-1)^n \left(-\frac{q^2}{b}\right)^nq^{n(n+1)/2}=\frac{(q^3/b;q^2)_\infty (q^9/b^3;q^6)_\infty}{(q^5/b^2;q^2)_\infty (q^3;q^6)_\infty}.
\end{align}
Setting $b=q$ we complete the proof of Theorem \ref{thm-Wangconj}. In fact, replacing $b$ by $q^2/b$ we recover Chern's formula \eqref{Chern-gen}.
\end{proof}

\begin{proof}[Proof of Theorem \ref{thm-conj}]
We consider the series
\begin{align}
\overline{R}(q)&:=\oint \sum_{i=0}^\infty \frac{q^{\frac{1}{2}(i^2+3i)}(-1)^iz^i}{(q;q)_i}\sum_{j=0}^\infty \frac{z^{3j}q^{4j}}{(q^3;q^3)_j}\sum_{k=-\infty}^\infty (-1)^kz^{-k}q^{\frac{1}{2}(k^2-k)} \frac{dz}{2\pi iz} \nonumber \\
&=\sum_{i,j\geq 0} \frac{(-1)^jq^{i^2+3ij+\frac{9}{2}j^2+i+\frac{5}{2}j}}{(q;q)_i(q^3;q^3)_j}.
\end{align}
Now using \eqref{Euler-1}, \eqref{Euler-2} and \eqref{Jacobi} we deduce that
\begin{align}
\overline{R}(q)=\oint \frac{(q^2z,qz,1/z,q;q)_\infty}{(q^4z^3;q^3)_\infty} \frac{dz}{2\pi iz} =\oint \frac{(q^2z,qz,1/z,q;q)_\infty}{(q^{\frac{4}{3}}z,\omega q^{\frac{4}{3}}z,\omega^2 q^{\frac{4}{3}}z;q)_\infty} \frac{dz}{2\pi iz} \label{barR-int}
\end{align}
Now setting
$$(A,B,C,D)=(2,1,3,0), ~ (a_1,a_2)=(q^2,q), ~ b_1=1, ~ (c_1,c_2,c_3)=(q^{\frac{4}{3}},\omega q^{\frac{4}{3}}, \omega^2q^{\frac{4}{3}})$$
in \eqref{eq-integral}, we deduce that
\begin{align}
\overline{R}(q)=& \frac{(q^{\frac{4}{3}},q^{\frac{2}{3}},q^{-\frac{1}{3}};q)_\infty}{(\omega,\omega^2;q)_\infty} \overline{R}_1(q)+ \frac{(\omega q^{\frac{4}{3}}, \omega^2q^{\frac{2}{3}}, \omega^2q^{-\frac{1}{3}};q)_\infty}{(\omega^2,\omega;q)_\infty}\overline{R}_2(q) \nonumber \\
&+\frac{(\omega^2 q^{\frac{4}{3}}, \omega q^{\frac{2}{3}}, \omega q^{-\frac{1}{3}};q)_\infty}{(\omega,\omega^2;q)_\infty}\overline{R}_3(q) \nonumber \\
=& -\frac{1}{3}q^{-\frac{1}{3}}\frac{(q;q)_\infty}{(q^3;q^3)_\infty} \Big( (q^{\frac{1}{3}},q^{\frac{2}{3}},q^{\frac{2}{3}};q)_\infty \overline{R}_1(q)+\omega^2 (\omega q^{\frac{1}{3}}, \omega^2 q^{\frac{2}{3}}, \omega^2 q^{\frac{2}{3}};q)_\infty \overline{R}_2(q) \nonumber \\
& \qquad \qquad \qquad \quad +\omega (\omega^2 q^{\frac{1}{3}},\omega q^{\frac{2}{3}},\omega q^{\frac{2}{3}};q)_\infty \overline{R}_3(q)\Big), \label{barR-eval-key}
\end{align}
where
\begin{align}
\overline{R}_1(q)&=\sum_{n=0}^\infty \frac{(-1)^n(q^{\frac{1}{3}};q)_n q^{\frac{1}{2}n^2+\frac{5}{6}n}}{(q,\omega q,\omega^2q;q)_n}, \label{barR-1defn} \\
\overline{R}_2(q)&=\sum_{n=0}^\infty \frac{(-1)^n(\omega q^{\frac{1}{3}};q)_n\omega^n q^{\frac{1}{2}n^2+\frac{5}{6}n}}{(q,\omega q, \omega^2q;q)_n}, \label{barR-2defn}\\
\overline{R}_3(q)&=\sum_{n=0}^\infty \frac{(-1)^n(\omega^2 q^{\frac{1}{3}};q)_n \omega^{2n} q^{\frac{1}{2}n^2+\frac{5}{6}n}}{(q,\omega q,\omega^2 q;q)_n}. \label{barR-3defn}
\end{align}

By Theorem \ref{thm-Wangconj} we have
\begin{align}
\overline{R}_1(q^3)=\frac{(q^4;q^6)_\infty (q^{12};q^{18})_\infty}{(q^5;q^6)_\infty (q^9;q^{18})_\infty}. \label{barR-1-exp}
\end{align}
If we replace $q$ by $\omega q$ or $\omega^2 q$ in the definition of $\overline{R}_1(q^3)$, then we get $\overline{R}_2(q^3)$ and $\overline{R}_3(q^3)$, respectively. Thus
\begin{align}
\overline{R}_2(q^3)&=\frac{(\omega q^4;q^6)_\infty (q^{12};q^{18})_\infty}{(\omega^2 q^5;q^6)_\infty (q^9;q^{18})_\infty}, \label{barR-2-exp} \\
\overline{R}_3(q^3)&=\frac{(\omega^2q^4;q^6)_\infty (q^{12};q^{18})_\infty}{(\omega q^5;q^6)_\infty (q^9;q^{18})_\infty}. \label{barR-3-exp}
\end{align}
Now replacing $q$ by $q^3$ in \eqref{barR-eval-key} and substituting \eqref{barR-1-exp}--\eqref{barR-3-exp} into it, we obtain
\begin{align}
\overline{R}(q^3)=-\frac{1}{3}\frac{(q^3;q^3)_\infty (q^{12};q^{18})_\infty }{(q^9;q^9)_\infty (q^9;q^{18})_\infty} \left(W(q)+W(\omega q)+W(\omega^2q) \right), \label{barR-key}
\end{align}
where
\begin{align}
W(q)=q^{-1}(q,q^2,q^2;q^3)_\infty \frac{(q^4;q^6)_\infty}{(q^5;q^6)_\infty}=\frac{f_1f_2}{qf_3f_6}.
\end{align}
Recall the cubic continued fraction studied by Ramanujan (see page 366 of \cite{LostNotebook}):
\begin{align}\label{cubic-defn}
\nu(q):=\cfrac{q^{1/3}}{1+\cfrac{q+q^2}{1+\cfrac{q^2+q^4}{1+\cdots}}}.
\end{align}
Chan \cite{Chan-cubic} proved that
\begin{align}
\frac{1}{\nu(q^3)}-1-2\nu(q^3)=\frac{(q;q)_\infty (q^2;q^2)_\infty}{q(q^9;q^9)_\infty (q^{18};q^{18})_\infty}. \label{cubic-id}
\end{align}
If we write the series expansion of $W(q)$ as $W(q)=\sum_{n=-1}^\infty a(n)q^n$, then by \eqref{barR-key} and \eqref{cubic-id} we deduce that
\begin{align}
\overline{R}(q^3)=-\frac{(q^3;q^3)_\infty (q^{12};q^{18})_\infty}{(q^9;q^9)_\infty (q^9;q^{18})_\infty} \sum_{n=0}^\infty a(3n) q^{3n}=\frac{(q^{12},q^{18};q^{18})_\infty}{(q^9;q^{18})_\infty (q^6;q^6)_\infty}.
\end{align}
Replacing $q$ by $q^{\frac{1}{3}}$, we obtain \eqref{AU-conj}.
\end{proof}

\begin{proof}[Proof of Theorem \ref{thm14}]
Let
\begin{align}
F(u,v;q):=\sum_{i,j\geq 0} \frac{u^{i}v^{j}q^{\binom{i}{2}+2\binom{i+2j}{2}}}{(q;q)_{i}(q^4;q^4)_{j}}. \label{eq-14-F}
\end{align}
By \eqref{Euler-1}, \eqref{Euler-2} and \eqref{Jacobi}, this double sum satisfies
\begin{align}
F(u,v;q)&=\oint \sum_{i=0}^\infty \frac{(-1)^iu^{i}z^{i}q^{\binom{i}{2}}}{(q;q)_{i}} \sum_{j=0}^\infty \frac{v^{j}z^{2j}}{(q^4;q^4)_{j}} \sum_{k=-\infty}^\infty  (-1)^k q^{2\binom{k}{2}}z^{-k} \frac{\diff z}{2\pi iz}  \nonumber \\
&= \oint \frac{(uz;q)_\infty (q^2z,1/z,q^2;q^2)_\infty}{(vz^2;q^4)_\infty} \frac{\diff z}{2\pi iz} \nonumber \\
&= \oint \frac{(uz,uqz,q^2z,1/z,q^2;q^2)_\infty}{(v^{\frac{1}{2}}z,-v^{\frac{1}{2}}z;q^2)_\infty } \frac{\diff z}{2\pi iz}. \label{eq-14-integral}
\end{align}

Taking $(u,v)=(x,x^2)$, from \eqref{eq-14-F} and \eqref{eq-14-integral} we have
\begin{align}
F(x,x^2;q)&=\sum_{i,j\geq 0}\frac{q^{\frac{3}{2}(i^2-i)+4ij+4j^2-2j}x^{i+2j}}{(q;q)_{i}(q^4;q^4)_{j}} \nonumber \\
&= \oint \frac{(xqz,q^2z,1/z,q^2;q^2)_\infty}{(-xz;q^2)_\infty } \frac{\diff z}{2\pi iz}.
\end{align}
By the $q$-binomial theorem \eqref{q-binomial}, we deduce that
\begin{align}\label{Fx}
F(x,x^2;q)&=\oint \sum_{n=0}^\infty \frac{(-q;q^2)_n(-xz)^n}{(q^2;q^2)_n}\sum_{k=-\infty}^\infty (-1)^kq^{k(k-1)}z^{-k} \frac{\diff z}{2\pi iz} \nonumber \\
&=\sum_{n=0}^\infty \frac{q^{n^2-n}(-q;q^2)_nx^n}{(q^2;q^2)_n}.
\end{align}

Setting $x=q$ in \eqref{Fx}, we obtain by \eqref{Slater36} that
\begin{align}
F(q,q^2;q)=\sum_{n=0}^\infty \frac{q^{n^2}(-q;q^2)_n}{(q^2;q^2)_n}=\frac{1}{(q,q^4,q^7;q^8)_\infty}.
\end{align}
This proves \eqref{KR21}.

Setting $x=q^3$ in \eqref{Fx}, we obtain by \eqref{Slater34} that
\begin{align}
F(q^3,q^6;q)=\sum_{n=0}^\infty \frac{q^{n^2+2n}(-q;q^2)_n}{(q^2;q^2)_n}=\frac{1}{(q^3,q^4,q^5;q^8)_\infty}.
\end{align}
This proves \eqref{KR22}.

By \eqref{Fx} and \eqref{Lebesgue} with $(a,q)\rightarrow (-q^3,q^2)$, we see that the left side of \eqref{KR23} is
\begin{align}
&F(q^2,q^4;q)+qF(q^4,q^8;q)=\sum_{n=0}^\infty \frac{q^{n^2+n}(-q;q^2)_n}{(q^2;q^2)_n}(1+q^{2n+1}) \nonumber  \\
&=(1+q)\sum_{n=0}^\infty \frac{q^{n(n+1)}(-q^3;q^2)_{n}}{(q^2;q^2)_n}  \nonumber \\
&=(-q;q^4)_\infty (-q^2;q^2)_\infty =\frac{1}{(q,q^5,q^6;q^8)_\infty}.  \nonumber
\end{align}
This proves \eqref{KR23}.

Setting $x=q^2$ in \eqref{Fx}, we obtain by \eqref{Gollnitz24} that
\begin{align}
F(q^2,q^4;q)=\sum_{n=0}^\infty \frac{q^{n^2+n}(-q;q^2)_n}{(q^2;q^2)_n}=\frac{1}{(q^2,q^3,q^7;q^8)_\infty}.
\end{align}
This proves \eqref{KR24}.
\end{proof}
\begin{rem}
In the proof of \eqref{KR23} we have shown that
\begin{align}
\sum_{n=0}^\infty \frac{q^{n(n+1)}(-q;q^2)_{n+1}}{(q^2;q^2)_n}=\frac{1}{(q,q^5,q^6;q^8)_\infty}. \label{Goll-new}
\end{align}
This can be compared with \eqref{Gollnitz22}. They have different sum sides but the same product sides.  It is easy to see that \eqref{Gollnitz22} is also a special case of \eqref{Lebesgue} with $q$ replaced by $q^2$ and $a=-q^{-1}$.
\end{rem}

\section{Concluding remarks}\label{sec-conclude}
So far we have seen that the integral method is quite useful in proving certain Rogers-Ramanujan type identities. It should be mentioned that this method has its limitations. When the sum side cannot be expressed in terms of a nice integral which is calculable, this method may not work.

Besides the triple sum identities including \eqref{KR-triple-1} and \eqref{KR-tripke-2},  Kanade and Russell \cite{KR-2019} also discovered some conjectural quadruple sum identities. To state their conjectures, we shall define
\begin{align}
F(u,v,w,t;q):=\sum_{i,j,k,\ell \geq 0}\frac{(-1)^{\ell}u^{i}v^{j}w^{k}t^{\ell}q^{\binom{i+2j+3k+4\ell}{2}+2\ell^2-2\ell}}{(q;q)_{i}(q^2;q^2)_{j}(q^3;q^3)_{k}(q^4;q^4)_{\ell}}.
\end{align}
The equivalent version of the conjectures of Kanade and Russell \cite[Eqs.\ (62)--(65)]{KR-2019} are as follows:
\begin{align}
F(q,q^3,q^6,q^8;q)&=\frac{1}{(q,q^3,q^4,q^6,q^8,q^9,q^{11};q^{12})_\infty}, \label{KR-conj-1} \\
F(q^3,q^5,q^6,q^{12};q)&=\frac{1}{(q^3,q^4,q^5,q^6,q^7,q^8,q^9;q^{12})_\infty}, \label{KR-conj-2} \\
F(q^2,q^3,q^5,q^8;q)&=\frac{1}{(q^2,q^3,q^4,q^5,q^8,q^9,q^{11};q^{12})_\infty} \label{KR-conj-3}
\end{align}
and
\begin{align}
&F(q^2,q^3,q^4,q^8;q)+qF(q^3,q^5,q^7,q^{10};q)+q^3F(q^4,q^7,q^{14},q^{10};q)\nonumber \\
& \quad -q^7F(q^5,q^9,q^{18},q^{13};q)  =\frac{1}{(q,q^3,q^4,q^7,q^8,q^9,q^{10};q^{12})_\infty}. \label{KR-conj-4}
\end{align}
With the help of Maple, we searched for similar identities and find the following identities:
\begin{align}
F(q,q^3,q^4,q^6;q)&=\frac{1}{(q,q^3,q^4,q^6,q^7,q^{10},q^{11};q^{12})_\infty}, \label{conj-1} \\
F(q^2,q^5,q^5,q^{10};q)&=\frac{1}{(q^2,q^3,q^5,q^6,q^7,q^8,q^{11};q^{12})_\infty}. \label{conj-2}
\end{align}
It turns out that they were already implicitly included as conjectures in the work of Kanade and Russell \cite{KR-2019}. Indeed, \eqref{conj-1} (resp.\ \eqref{conj-2}) can be viewed as an analytic form of the conjectural partition identity $I_5$ (resp.\ $I_6$) in \cite{KR-2015} (see also \cite[Secs.\ 5.1.1, 5.1.2]{KR-2019}). In fact,  setting $x=1$ in \cite[Eq.\ (54)]{KR-2019}, we get (conjecturally) \eqref{conj-2}. One can treat the second line of \cite[Eq.\ (45)]{KR-2015} in a way similar to \cite[Eqs.\ (52),(53)]{KR-2019} and then get an expression similar to \cite[Eq.\ (54)]{KR-2019}, and then we get \eqref{conj-1} (conjecturally) after setting $x=1$ in the resulting expression.

Furthermore, Kanade and Russell \cite{KR-2019} also gave different analytical forms for the conjectural partitions identities $I_5$ and $I_6$. After setting $x=1$ in \cite[Eqs.\ (47),(51)]{KR-2019} (except for a typo there), we see that the conjectural partition identities $I_5$ and $I_6$ are equivalent to the following triple sum identities
\begin{align}
\sum_{i,j,k\geq 0}\frac{q^{\binom{i+2j+3k}{2}+i+j^2+2j+4k}}{(q;q)_i(q^2;q^2)_j(q^3;q^3)_k}&=\frac{1}{(q,q^3,q^4,q^6,q^7,q^{10},q^{11};q^{12})_\infty}, \label{conj-1-equivalent} \\
\sum_{i,j,k\geq 0}\frac{q^{\binom{i+2j+3k}{2}+2i+j^2+j+4j+5k}}{(q;q)_i(q^2;q^2)_j(q^3;q^3)_k}&=\frac{1}{(q^2,q^3,q^5,q^6,q^7,q^8,q^{11};q^{12})_\infty}. \label{conj-2-equivalent}
\end{align}
They have already been proved by Bringmann, Jennings-Shaffer and Mahlburg \cite{BSM}. Hence the partition identities $I_5$ and $I_6$ have been confirmed and so the conjectural identities \eqref{conj-1} and \eqref{conj-2} are also confirmed.

So far the conjectural identities \eqref{KR-conj-1}--\eqref{KR-conj-4} are still open. We believe that the identities \eqref{KR-conj-1}--\eqref{conj-2} can all be proved via the integral method. In fact, we can use \eqref{Euler-1}, \eqref{Euler-2} and \eqref{Jacobi} to express the sum side as
\begin{align}
F(u,v,w,t;q)
&=\oint \sum_{i=0}^\infty \frac{(-uz)^i}{(q;q)_i} \sum_{j=0}^\infty \frac{(vz^2)^j}{(q^2;q^2)_j} \sum_{k=0}^\infty \frac{(-wz^3)^k}{(q^3;q^3)_k} \sum_{\ell=0}^\infty \frac{q^{2\ell^2-2\ell}(-tz^4)^\ell}{(q^4;q^4)_\ell} \nonumber \\
& \qquad \qquad  \times \sum_{s=-\infty}^\infty (-1)^s q^{\binom{s}{2}}z^{-s} \frac{dz}{2\pi iz} \nonumber \\
&=\oint \frac{(tz^4;q^4)_\infty (qz,1/z,q;q)_\infty}{(-uz;q)_\infty (vz^2;q^2)_\infty (-wz^3;q^3)_\infty} \frac{dz}{2\pi iz} \label{QF-1} \\
&=\oint \frac{(t^{\frac{1}{4}}z,-t^{\frac{1}{4}}z,it^{\frac{1}{4}}z,-it^{\frac{1}{4}}z,qz,1/z,q;q)_\infty}{(-uz,v^{\frac{1}{2}}z,-v^{\frac{1}{2}}z,-w^{\frac{1}{3}}z,-\omega w^{\frac{1}{3}}z,-\omega^2 w^{\frac{1}{3}}z;q)_\infty} \frac{dz}{2\pi iz}. \label{QF-2}
\end{align}
It is still possible to evaluate this integral using Lemma \ref{lem-integral} and hence give a way to prove the above conjectures of Kanade and Russell. We have made some progress towards these conjectures and plan to discuss this in a separate paper.

It is clear that the integral method can also be applied to discover new identities. In another paper \cite{Cao-Wang}, using this method, we will establish some new multi-sum Rogers-Ramanujan type identities with various indexes.

\subsection*{Acknowledgements}
We thank Matthew Russell for pointing out the relations between \eqref{conj-1}--\eqref{conj-2} and the conjectural identities $I_5$ and $I_6$ in \cite{KR-2015} and their discussions of these identities in Sections 5.1.1 and 5.1.2 in \cite{KR-2019}. This work was supported by the National Natural Science Foundation of China (12171375).


\begin{thebibliography}{0}

\bibitem{Andrews1974} G.E. Andrews, On the General Rogers-Ramanujan Theorem. Providence, RI: Amer. Math. Soc., 1974.

\bibitem{Andrews} G.E. Andrews, The Theory of Partitions, Addison-Wesley, 1976; Reissued Cambridge, 1998.

\bibitem{Andrews1994} G.E. Andrews, Schur's theorem, Capparelli's conjecture and $q$-trinomial coefficients. In: G.E. Andrews,
D.M. Bressoud, L.A. Parson (eds.) The Rademacher Legacy to Mathematics, pp. 141--154. American Mathematical Society, Providence (1994).

\bibitem{Andrews2019} G.E. Andrews, Sequences in partitions, double $q$-series and the mock theta function $\rho_3(q)$, Algorithmic
Combinatorics-Enumerative Combinatorics, Special Functions and Computer Algebra, Springer, Cham. Switzerland,
(2020), pp. 25--46.

\bibitem{Lost2} G.E. Andrews and B.C. Berndt, Ramanujan's Lost Notebook, Part II , Springer 2009.

\bibitem{Andrews-Uncu} G.E. Andrews and A.K. Uncu, Sequences in overpartitions, arXiv:2111.15003v1.


\bibitem{Bressoud1979} D.M. Bressoud, A generalization of the Rogers-Ramanujan identities for all moduli, J. Combin. Theory Ser. A 27 (1979), 64--68.

\bibitem{BSM} K. Bringmann, C. Jennings-Shaffer and K. Mahlburg, Proofs and reductions of various conjectured partition identities of Kanade and Russell, J. Reine Angew. Math. 766 (2020), 109--135.

\bibitem{Cao-Wang} Z. Cao and L. Wang, Multi-sum Rogers-Ramanujan type identities, arXiv:2205.12786.

\bibitem{Capparelli}  S. Capparelli, On some representations of twisted affine Lie algebras and combinatorial identities. J.
Algebra 154 (1993), 335--355.

\bibitem{Chan-cubic} H.C. Chan, Ramanujan's cubic continued fraction and a generalization of his ``most beautiful identity''.
Int. J. Number Theory 6 (2010), 673--680.

\bibitem{Chern-communication} S. Chern, Private communication on March 29, 2022.

\bibitem{Chern} S. Chern, Asymmetric Rogers-Ramanujan type identities. I, The Andrews-Uncu Conjecture, submitted.

\bibitem{GR-book} G. Gasper and M. Rahman, Basic Hypergeometric Series, 2nd Edition, Encyclopedia of Mathematics and Its Applications, Vol.\ 96, Cambridge University Press, 2004.

\bibitem{Gollnitz} H. G\"{o}llnitz, Partitionen mit Differenzenbedingungen, J. Reine Angew. Math. 225 (1967) 154--190.

\bibitem{Gordon1961} B. Gordon, A combinatorial generalization of the Rogers-Ramanujan identities, Amer. J. Math. 83 (1961), 393--399.

\bibitem{Hirschhorn} M.D. Hirschhorn, F. Garvan and J. Borwein, Cubic analogs of the Jacobian cubic theta function $\theta(z, q)$, Canad. J. Math. 45 (1993), 673--694.

\bibitem{KR-2015} S. Kanade and M.C. Russell, IdentityFinder and some new partition identities of Rogers-Ramanujan type, Exp. Math. 24 (2015), no. 4, 419--423.

\bibitem{KR-2019} S. Kanade and M.C. Russell, Staircases to analytic sum-sides for many new integer partition identities of Rogers-Ramanujan type. Electron. J. Combin. 26 (2019), 1--6.


\bibitem{Kursungoz} K. Kur\c{s}ung\"{o}z, Andrews-Gordon type series for Capparelli's and G\"{o}llnitz-Gordon identities, J. Combin. Theory Ser. A 165 (2019), 117--138.

\bibitem{Sills}  J.M. Laughlin, A.V. Sills and P. Zimmer,  Rogers-Ramanujan-Slater type identities, Electron. J. Combin. 15 (2008), \#DS15.

\bibitem{Rahman} M. Rhaman, Some infinite integrals of  $q$-Bessel functions. Ramanujan International Symposium on Analysis (Pune, 1987), 117--137, Macmillan of India, New Delhi, 1989.

\bibitem{LostNotebook} S. Ramanujan, The Lost Notebook and Other Unpublished Papers. Narosa, New Delhi (1988).

\bibitem{Rosengren} H. Rosengren, Proofs of some partition identities conjectured by Kanade and Russell, Ramanujan J. https://doi.org/10.1007/s11139-021-00389-9


\bibitem{Sills-book} A.V. Sills, An Invitation to the Rogers-Ramanujan Identities, CRC Press (2018).

\bibitem{Slater} L.J. Slater, Further identities of the Rogers-Ramanujan type, Proc. Lond. Math. Soc. (2) 54 (1)
(1952), 147--167.

\bibitem{Takigiku2019} M. Takigiku and S. Tsuchioka, A proof of conjectured partition identities of Nandi, arXiv:1910.12461, 2019.

\bibitem{Uncu-Zudilin} A. Uncu and W. Zudilin, Reflecting (on) the modulo 9 Kanade-Russell (conjectural) identities, arXiv: 2106.02959v2.

\bibitem{Xia} E.X.W. Xia and O.X.M. Yao, Analogues of Ramanujan's partition identities, Ramanujan J. 31 (2013), 373--396.
\end{thebibliography}
\end{document}